\documentclass[12pt,reqno]{amsart}
\usepackage{amsmath, amssymb, latexsym, amscd, amsthm,amsfonts,amstext}
\usepackage[mathscr]{eucal}
\theoremstyle{plain}
\newtheorem{lemma}{Lemma}[section]
\newtheorem{theorem}[lemma]{Theorem}

\newtheorem{proposition}[lemma]{Proposition}

\newtheorem{remark}[lemma]{Remark}

\def\underset#1#2{{\mathrel{\mathop {{}_{} {#2}}\limits_{{#1}_{}}}}}
\def\upplim_#1{\underset{#1}{\overline\lim}\;}
\def\lowlim_#1{\underset{#1}{\underline\lim}\;}
\setlength{\textwidth}{160true mm}
\setlength{\textheight}{230true mm}
\setlength{\topmargin}{0true mm}
\setlength{\oddsidemargin}{3true mm}
\setlength{\evensidemargin}{3true mm}
\parindent=10pt
\parskip3pt

\newcommand{\ord}{{\mathrm{ord}}}

\numberwithin{equation}{section}

\begin{document}
\title[ ]{A generalization of effective schmidt's subspace theorem for projective varieties over function fields}
\author{Giang Le}
\setlength{\baselineskip}{16pt}
\maketitle

\begin{abstract}{We establish an effective version of Schmidt's subspace theorem on a smooth projective variety $\mathcal{X}$ over function fields of characteristic zero for hypersurfaces located in $m-$subgeneral position with respect to $\mathcal{X}$. Our result generalizes and improves the results of An-Wang \cite{AW}, Ru-Wang \cite{RW} and the author \cite{G4}.}
\end{abstract}

\def\thefootnote{\empty}
\footnotetext{2010 Mathematics Subject Classification:
 11J97, 11J61.\\
\hskip8pt Key words and phrases: Schmidt's subspace theorem, Function fields, Diophantine approximation.}
\setlength{\baselineskip}{16pt}
\maketitle
\section{Introduction}
In \cite{AW}, An and Wang obtained an effective Schmidt's subspace theorem for non-linear forms over function fields. In \cite{RW}, Ru and Wang extended such effective results to divisors of a projective variety $\mathcal{X}\subset\mathbb{P}^M$ coming from hypersurfaces in $\mathbb{P}^M$ in general position with respect to $\mathcal{X}$. In \cite{G4}, we extended Ru-Wang's result to the case of hypersurfaces located in subgeneral position. However, due to the weakness of estimate on lower bound of  Chow weight in \cite[Proposition 2.1]{G4}, our result \cite{G4} does not imply the results of  An-Wang \cite{AW} and Ru-Wang \cite{RW}. In \cite[Theorem 1.4]{Q}, S.D.Quang gave a better estimate on lower bound of Chow weight. It helps us to improve our result in \cite{G4}. This new result improves and generalize all the existing results in such the direction.

Here, let $\mathcal{X}$ be a $n$-dimensional projective subvariety of $\mathbb{P}^M$ defined over $K$ and $m, q$ be positive integers with $m\geq n$ and $q\geq m+1$. Recall that homogeneous polynomials $Q_1,\ldots,Q_q\in K[X_0,\ldots,X_M]$ are said to be in \emph{m-subgeneral position with respect to $\mathcal{X}$} if $\cap_{j=1}^{m+1}(\{Q_{i_j}=0\})\cap \mathcal{X}(\bar{K})=\emptyset$ for any distinct $i_1,\ldots,i_{m+1}\in \{1,\ldots,q\},$ where $\bar{K}$ is the algebraic closure of $K$. When $m=n,$ they are said to be  in \emph{general position with respect to $\mathcal{X}$}.


To state our results, we will recall some definitions and basic facts from algebraic geometry.

Let $k$ be an algebraically closed field of characteristic 0 and let $V$ be  a projective variety (always assumed irreducible), non-singular in codimension 1 and defined over $k$. For the rest of paper, we shall fix an embedding of  $V$ such that $V\subset \mathbb{P}^{M_0}$ for some positive integer $M_0$. 

Denote by $K=k(V)$ the function field of $V$. Let $M_K$ be the set of discrete absolute values of the function field $K$ obtained from the prime divisors of $V$.  Let $\mathfrak{p}$ be a prime divisor of $V$ over $k$. Such a prime divisor determines its local ring in the function field $k(V)$ and this local ring is a discrete valuation ring. Thus, we have the notion of order  at $\mathfrak{p}$ of a function $x\in K, x\not=0$, noted $\ord_\mathfrak{p}x$. We can associate to \emph{x} its divisors
==$$(x)=\sum_{\mathfrak{p}\in M_K}\ord_\mathfrak{p}(x)\mathfrak{p}.$$
 By the $degree$ of $\mathfrak{p}$, noted deg $\mathfrak{p}$, we shall mean the projective degree, i.e. the number of points of intersection with a generic linear variety of complementary dimension in the given projective embedding. Then we have the sum formula
$$\text{deg}(x)=\sum_{\mathfrak{p}\in M_K}\ord_\mathfrak{p}(x)\deg\,\mathfrak{p}=0$$ 
for all $x\in K^*$.

Let ${\bf x}=[x_0:x_1:\cdots:x_M]\in\mathbb{P}^M(K)$ and define
$$e_{\mathfrak{p}}({\bf x}):=\min\limits_{0\leq i\leq M}\{\ord_{\mathfrak{p}}(x_i)\}.$$
We define the (logarithmic) height of ${\bf x}$ by:
$$h({\bf x})=-\sum_{\mathfrak{p}\in M_K}e_{\mathfrak{p}}({\bf x}) \deg\,\mathfrak{p}.$$
By the sum formula, the height function is well-defined on $\mathbb{P}^M(K)$.

Let $Q=\sum_Ia_I {\bf x}^I$ be a homogeneous polynomial of degree $d$ in $K[X_0,\ldots, X_M],$ where ${\bf x}^I=x_0^{i_0}\ldots x_M^{i_M}$ and the sum is taken over all index sets $I=\{i_0,\ldots,i_M\}$ such that $i_j\geq 0$ and $\sum\limits_{j=0}^Mi_j=d$. For  each $\mathfrak{p}\in M_K,$ we set
$$e_{\mathfrak{p}}(Q):=\text{min}_I\{\ord_{\mathfrak{p}}(a_I)\}.$$
The height of a homogeneous polynomial $Q$ of degree $d$ in $K[X_0,\ldots, X_M]$ is defined by the height of coefficients:
$$h(Q)=\sum_{\mathfrak{p}\in M_K}-e_{\mathfrak{p}}(Q)\deg \mathfrak{p}.$$
From the sum formula, we have $h(\alpha Q)=h(Q)$ for all $\alpha\in K^*.$  Since we may assume that one of the non-zero coefficient of $Q$ is $1$, it follows that $h(Q)\geq 0$.

The Weil function $\lambda_{\mathfrak{p},Q}$ is defined by
$$\lambda_{\mathfrak{p},Q}({\bf x}):=\left(\ord_{\mathfrak{p}}(Q({\bf x}))-de_{\mathfrak{p}}({\bf x})-e_{\mathfrak{p}}(Q)\right)\text{deg}\,\mathfrak{p}\geq 0$$
for ${\bf x}\in\mathbb{P}^M(K)\backslash\{Q=0\}.$

Let $Q_1, Q_2,\ldots, Q_q$ be homogeneous polynomials of degree $d$ in $K[X_0,\ldots,X_M]$. We define $$e_\mathfrak{p}(Q_1,\ldots,Q_q)=\min\{e_\mathfrak{p}(Q_1),\ldots,e_\mathfrak{p}(Q_q)\}$$
and
$$h(Q_1,\ldots,Q_q)=-\sum_{\mathfrak{p}\in M_K}e_\mathfrak{p}(Q_1,\ldots,Q_q)\deg \mathfrak{p}.$$

Let $\mathcal{X}$ be a $n$-dimensional projective subvariety of $\mathbb{P}^M$ defined over $K$. The height of $\mathcal{X}$ is defined by $$h(\mathcal{X}):=h(F_\mathcal{X}),$$
where $F_\mathcal{X}$ is the Chow form of $\mathcal{X}$.

In this paper, we will prove the following effective version of the generalized Schmidt's  subspace theorem over $K$.\\ 
\noindent
{\bf Main Theorem.} {\it 
 Let $K$ be the function field of a nonsingular projective variety $V$ defined over an algebraically closed field of characteristic 0 and let S be a finite set of prime divisors of V. Let $\mathcal{X}$ be a smooth n-dimensional projective subvariety of $\mathbb{P}^N$ defined over K with projective degree $\triangle_\mathcal{X}$. Let $m, q$ be integers with $m\geq n$ and $q\geq m+1.$ For all $i=1,\ldots, q$, let $Q_i $ be homogeneous polynomials of degree $d_i$ in $K[X_0,\ldots,X_N]$ in m-subgeneral position with respect to $\mathcal{X}$. Then for any given $\epsilon>0$, there exists an effectively computable finite union $\mathcal{W}_{\epsilon}$ of proper algebraic subsets of $\mathbb{P}^N(K)$ not containing $\mathcal{X}$  and effectively computable constants $C_{\epsilon}, C'_{\epsilon}$ such that for any ${\bf x}\in \mathcal{X}(K) \backslash\mathcal{W}_{\epsilon}$ either
$$h({\bf x})\leq C_{\epsilon}$$
or 
$$\sum_{i=1}^q\sum_{\mathfrak{p}\in S}d_i^{-1}\lambda_{\mathfrak{p},Q_i}({\bf x})\leq ((m-n+1)(n+1)+\epsilon)h({\bf x})+C'_{\epsilon}.$$

The algebraic subsets in $\mathcal{W}_\epsilon$ and the constants $C_{\epsilon}, C'_{\epsilon}$ depend on $\epsilon$ and $ N, q, m, K, S, \mathcal{X}$ and the $Q_i.$ Furthermore, the degrees of the algebraic subsets in $\mathcal{W}_\epsilon$ can be bounded above by
$$2(2n+1)d^{n+1}\bigtriangleup_\mathcal{X}\left(\binom{d+N}{N}+q+1\right)\epsilon^{-1}+d,$$
where $d=lcm(d_1,\ldots,d_q)$.}
\begin{remark} The constants $C_\epsilon, C'_\epsilon$ will be given in \eqref{e:c} and \eqref{e:c'}. They may depend on $\epsilon,$ the degree of the canonical divisor class of $V$, the projective degree of $V$, the degree of $S$ (i.e $\sum_{\mathfrak{p}\in S} \deg \mathfrak{p}$), the
projective degree of $\mathcal{X}$, the dimension of $\mathcal{X}$, the height of $\mathcal{X}$ and the $Q_i,$ $q$ and $m, N$.  
\end{remark}
\section{Chow forms, Chow weight of a projective variety}
  Let $\mathcal{Y}$ be a $n$-dimensional  projective subvariety of  $\mathbb{P}^M$ defined over $K$ of degree $\triangle_\mathcal{Y}$. To $\mathcal{Y}$, we can associate, up to a constant scalar, a unique polynomial
$$F_\mathcal{Y}({\bf u}_0,\ldots,{\bf u}_n)=F_\mathcal{Y}(u_{00},\ldots,u_{0M};\ldots; u_{n0},\ldots,u_{nM})$$
in $(n+1)$ blocks of variables ${\bf u}_i=(u_{i0},\ldots,u_{iM}), i=0,\ldots, n,$ which is called the \emph{Chow form} of $\mathcal{Y}$, with the following properties:

$F_\mathcal{Y}$ is irreducible,

 $F_\mathcal{Y}$ is homogeneous in each block $u_i, i=0,\ldots,n,$

$F_\mathcal{Y}({\bf u}_0,\ldots,{\bf u}_n)=0$ if and only if $\mathcal{Y}\cap H_{{\bf u}_0}\cap\ldots\cap H_{{\bf u}_n}$ contains a $\bar{K}$
-rational point,
where $H_{\bf{u}_i}, i=0,\ldots, n$ are hyperplanes given by ${\bf u}_i\cdot{\bf x}=u_{i0}x_0+\cdots+u_{iM}x_M=0.$ It is well-known that the degree of $F_\mathcal{Y}$ in each block ${\bf u}_i$ is $\triangle_\mathcal{Y}$.

Let ${\bf c}=(c_0,\ldots,c_M)$ be a tuple of reals. Let $t$ be an auxiliary variable. We consider the decomposition
\begin{align*}F_\mathcal{Y}(t^{c_0}u_{00},&\ldots, t^{c_M}u_{0M},\ldots,t^{c_0}u_{n0},\ldots,t^{c_M}u_{nM})\\
&=t^{e_0}G_0({\bf u}_0,\ldots,{\bf u}_n)+\cdots+t^{e_r}G_r({\bf u}_0,\ldots,{\bf u}_n),
\end{align*}
with $G_0,\ldots, G_r\in K[u_{00},\ldots, u_{0M};\ldots; u_{n0},\ldots,u_{nM}]$ and $e_0>\ldots>e_r.$ Now, we define the \emph{Chow weight of $\mathcal{Y}$ with respect to ${\bf c}$} by
$$e_\mathcal{Y}({\bf c}):=e_0.$$
\noindent

\noindent
We recall the following estimate on Chow weight of a projective variety $\mathcal{Y}$, due  to S.D. Quang \cite[Theorem 1.4]{Q}.

\begin{proposition}\label{p:41} Let $\mathcal{Y}$ be a n-dimensional  projective algebraic subvariety of $\mathbb{P}^M$ defined over $K$ of degree $\triangle_\mathcal{Y}.$ Let ${\bf c}=(c_0,\ldots,c_M)\in\mathbb{R}^{M+1}_+$. Let $m$ be a positive integer such that $m\geq n.$ Assume that $\mathcal{Y}\cap H_{i_0}\ldots \cap H_{i_m}=\emptyset $. Then,
$$e_\mathcal{Y}({\bf c})\geq \frac{\triangle_\mathcal{Y}}{m-n+1}\cdot (c_{i_0}+\cdots+c_{i_m}).$$
\end{proposition}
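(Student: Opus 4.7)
The strategy I would follow is that of S.~D.\ Quang \cite{Q}, which sharpens the classical Chow--weight estimate of Evertse--Ferretti and Ru--Wang (the case $m=n$). The main idea is to translate the bound on the Chow weight into an asymptotic statement about the Hilbert weight $S_\mathcal{Y}(N,\mathbf{c})$, i.e.\ the largest total $\mathbf{c}$-weight of a monomial basis of $K[\mathcal{Y}]_N$. Via the standard identity $e_\mathcal{Y}(\mathbf{c}) = (n+1)!\lim_{N\to\infty}N^{-(n+1)}S_\mathcal{Y}(N,\mathbf{c})$, it suffices to exhibit, for arbitrarily large $N$, a monomial basis of $K[\mathcal{Y}]_N$ whose total weight grows like $\frac{\triangle_\mathcal{Y}}{(m-n+1)(n+1)!}(c_{i_0}+\cdots+c_{i_m})N^{n+1}+O(N^n)$.

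After relabelling so that $c_{i_0}\geq c_{i_1}\geq\cdots\geq c_{i_m}$, I would extract inductively a subsequence $j_0<j_1<\cdots<j_n$ of $\{i_0,\ldots,i_m\}$ along which $\dim(\mathcal{Y}\cap H_{j_0}\cap\cdots\cap H_{j_k})$ drops by exactly one at each step, taking $j_k$ as small as possible at each stage. The $m$-subgeneral hypothesis $\mathcal{Y}\cap\bigcap_t H_{i_t}=\emptyset$ guarantees the existence of such a chain, and the greedy rule forces $j_k-k\leq m-n$. For this tuple, the classical Evertse--Ferretti/Ru--Wang argument yields a monomial basis of $K[\mathcal{Y}]_N$ whose weight sum has leading term $\triangle_\mathcal{Y}(c_{j_0}+\cdots+c_{j_n})N^{n+1}/(n+1)!$.

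To upgrade this to the sharper factor $(m-n+1)^{-1}$, I would average over a family of such greedy chains (for instance obtained by running the greedy procedure on several cyclic shifts of the ordering, or equivalently by combining filtrations of $K[\mathcal{Y}]_N$ adapted to different $(n+1)$-subsets of the $m+1$ hyperplanes). A pigeonhole count on the ``skipped'' indices then shows that each weight $c_{i_t}$ accumulates a coefficient at least $\triangle_\mathcal{Y}/(m-n+1)$ in the combined estimate, which is exactly the asserted inequality.

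The main obstacle is precisely this final averaging step. A single greedy chain gives only $\triangle_\mathcal{Y}(c_{j_0}+\cdots+c_{j_n})$, which can be strictly weaker than the target when the $\mathbf{c}$-mass is concentrated on indices whose hyperplanes cannot appear together in any empty $(n+1)$-intersection (so that no single good subset contains the high-weight coordinates). The improvement of \cite{Q} over earlier bounds (in particular over \cite{G4}) is an essentially sharp pigeonhole argument on the Hilbert filtration ensuring that every skipped index is compensated by at most $m-n$ alternate choices, and carefully implementing this bookkeeping is the delicate part of the proof.
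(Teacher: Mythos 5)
The paper does not prove this proposition at all: it is imported verbatim as S.~D.~Quang's result (cited as \cite[Theorem 1.4]{Q}), with no argument supplied. So there is no ``paper's proof'' to compare your attempt against, and your proposal is being measured against the cited source, not against anything in this text.

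Your outline of the ingredients that go into the $m=n$ case (Mumford's identity $e_\mathcal{Y}(\mathbf{c})=(n+1)!\lim N^{-(n+1)}S_\mathcal{Y}(N,\mathbf{c})$, construction of a monomial basis of $K[\mathcal{Y}]_N$ adapted to a greedy chain $j_0<\cdots<j_n$ along which the dimension of $\mathcal{Y}\cap H_{j_0}\cap\cdots\cap H_{j_k}$ drops, the resulting bound $e_\mathcal{Y}(\mathbf{c})\geq\triangle_\mathcal{Y}(c_{j_0}+\cdots+c_{j_n})$) is correct and is indeed the Evertse--Ferretti/Ru--Wang mechanism. The problem is that the step that produces the factor $(m-n+1)^{-1}$ --- the only thing that makes the proposition stronger than the classical bound --- is exactly the step you flag as ``the delicate part'' and do not carry out. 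A single greedy chain gives the sum of (at most) the $n+1$ smallest weights, which can be much smaller than $\frac{1}{m-n+1}\sum_t c_{i_t}$; and the suggested fix of averaging over ``cyclic shifts of the ordering'' is not a proof: which $(n+1)$-subsets of $\{i_0,\ldots,i_m\}$ yield an empty intersection with $\mathcal{Y}$ is dictated by the geometry, not by a relabelling of indices, so one has no control over which indices the greedy rule skips under a shifted order. Making the pigeonhole count precise requires the actual combinatorial filtration argument of \cite{Q} (or a Nochka-type reduction replacing the $m+1$ hyperplanes in subgeneral position by combinations in general position), and nothing in the proposal establishes it. As submitted this is a plan with a correctly identified obstruction, not a proof; the central inequality remains unproved.
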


\noindent
 We now recall an estimate on heights of Chow forms, due to Ru-Wang \cite[ Lemma 8]{RW}.
\begin{lemma}\label{l:21} Let $\mathcal{X}$ be a projective variety of $\mathbb{P}^M$ defined over $K$ with dimension $n\geq 1$ and degree $\triangle_\mathcal{X}$. Let $\psi: \mathcal{X}\longrightarrow \mathbb{P}^R$ be a finite morphism given by $\psi({\bf x})=[g_0({\bf x}):\cdots: g_R({\bf x})]$, where $g_0,\ldots,g_R$ are homogeneous polynomials of degree $d$ in $K[X_0,\ldots,X_M]$. Let $\mathcal{Y}=\psi(\mathcal{X})$. Then,
$$h(F_\mathcal{Y})\leq d^{n+1}h(F_\mathcal{X})+ (n+1) d^n \triangle_\mathcal{X}h(g_0,\ldots,g_R).$$
\end{lemma}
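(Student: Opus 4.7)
The plan is to factor $\psi$ through a Veronese embedding and control the Chow-form height in two successive stages. Set $N_d := \binom{M+d}{d}$ and let $\nu_d\colon \mathbb{P}^M\hookrightarrow \mathbb{P}^{N_d-1}$ denote the degree-$d$ Veronese embedding, with coordinates indexed by multi-indices $I=(i_0,\ldots,i_M)$ with $\sum_j i_j=d$. Writing $g_j=\sum_I a_{j,I}{\bf X}^I$, the morphism $\psi$ decomposes as $\psi=\pi\circ(\nu_d|_\mathcal{X})$, where $\pi\colon\mathbb{P}^{N_d-1}\dashrightarrow\mathbb{P}^R$ is the linear projection whose matrix entries are $(a_{j,I})$. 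Put $\mathcal{X}':=\nu_d(\mathcal{X})$; then $\mathcal{Y}=\pi(\mathcal{X}')$ and $\triangle_{\mathcal{X}'}=d^n\triangle_\mathcal{X}$. The proof then splits into bounding $h(F_{\mathcal{X}'})$ in terms of $h(F_\mathcal{X})$ (the Veronese stage) and bounding $h(F_\mathcal{Y})$ in terms of $h(F_{\mathcal{X}'})$ and $h(g_0,\ldots,g_R)$ (the projection stage).

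For the Veronese stage, I would establish $h(F_{\mathcal{X}'})\le d^{n+1}h(F_\mathcal{X})$ via the identity
$$F_{\mathcal{X}'}\bigl(\textstyle\prod_{k=1}^d{\bf u}_0^{(k)},\ldots,\prod_{k=1}^d{\bf u}_n^{(k)}\bigr)=c\cdot\prod_{(k_0,\ldots,k_n)\in\{1,\ldots,d\}^{n+1}}F_\mathcal{X}\bigl({\bf u}_0^{(k_0)},\ldots,{\bf u}_n^{(k_n)}\bigr),$$
where each $\prod_{k=1}^d{\bf u}_i^{(k)}$ denotes the coefficient vector in $\mathbb{P}^{N_d-1}$ of the degree-$d$ form $\prod_k({\bf u}_i^{(k)}\cdot{\bf X})$, and $c\in K^*$. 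Both sides are polynomials of degree $d^n\triangle_\mathcal{X}$ in each ${\bf u}_i^{(k)}$ and share the same vanishing locus (both vanish exactly when $\mathcal{X}$ meets the $n+1$ reducible degree-$d$ hypersurfaces $\prod_k({\bf u}_i^{(k)}\cdot{\bf X})=0$), so equality up to scalar follows from irreducibility considerations. Extracting each coefficient of $F_{\mathcal{X}'}$ by suitable generic specializations of the ${\bf u}_i^{(k)}$ expresses it as a polynomial of total degree $d^{n+1}$ in the coefficients of $F_\mathcal{X}$. Applying the ultrametric inequality at every $\mathfrak{p}\in M_K$, summing against $\deg\mathfrak{p}$, and using $\sum_\mathfrak{p}\ord_\mathfrak{p}(c)\deg\mathfrak{p}=0$ to discard $c$ yields the stated bound.

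For the projection stage, the relation $F_\mathcal{Y}({\bf v}_0,\ldots,{\bf v}_n)^e=c'\cdot F_{\mathcal{X}'}\bigl(A^T{\bf v}_0,\ldots,A^T{\bf v}_n\bigr)$, where $A=(a_{j,I})$ is the matrix of $\pi$ and $e=\deg(\pi|_{\mathcal{X}'})$, holds because both sides vanish exactly when $\mathcal{X}'$ meets the preimages $\pi^{-1}(H_{{\bf v}_i})$. Since $F_{\mathcal{X}'}$ is homogeneous of degree $d^n\triangle_\mathcal{X}$ in each of its $n+1$ blocks, each coefficient of $F_{\mathcal{X}'}\bigl(A^T{\bf v}_0,\ldots\bigr)$ is a coefficient of $F_{\mathcal{X}'}$ times a polynomial of total degree $(n+1)d^n\triangle_\mathcal{X}$ in the $a_{j,I}$. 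The place-by-place ultrametric inequality, combined with the definition of $h(g_0,\ldots,g_R)$, yields $e\cdot h(F_\mathcal{Y})\le h(F_{\mathcal{X}'})+(n+1)d^n\triangle_\mathcal{X}\,h(g_0,\ldots,g_R)$, and since $e\ge 1$ we may drop the factor $e$. Combining with the Veronese bound completes the proof.

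The principal obstacle is the Veronese stage: the substitution ${\bf w}_i\mapsto \prod_k{\bf u}_i^{(k)}$ (coefficient-wise) is not linearly invertible, so reading off the coefficients of $F_{\mathcal{X}'}$ from the product identity requires a careful specialization/Cramer-rule argument arranged so that the extraction preserves the $d^{n+1}$ factor without spurious inflation. The eliminability of the scalars $c,c'$ via the sum formula is the structural feature that keeps the non-archimedean height bookkeeping clean and allows the additive form of the final bound to emerge.
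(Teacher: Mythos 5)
The paper states this lemma as a citation of Ru--Wang \cite[Lemma 8]{RW} and does not reproduce a proof, so the comparison below is against what an argument of this kind actually requires rather than against a proof printed in the paper.

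Your projection stage is correct. The factorization $\psi = \pi \circ \nu_d|_\mathcal{X}$, the identity $F_\mathcal{Y}({\bf v}_0,\ldots,{\bf v}_n)^e = c'\,F_{\mathcal{X}'}(A^T{\bf v}_0,\ldots,A^T{\bf v}_n)$ with $e = \deg(\psi|_\mathcal{X}) = d^n\triangle_\mathcal{X}/\triangle_\mathcal{Y}$, the degree bookkeeping (each block contributes a factor $d^n\triangle_\mathcal{X}$ of the $a_{j,I}$), the multiplicativity of the Gauss content for $F_\mathcal{Y}^e$, and the removal of $c'$ and $e$ via the sum formula and $h(F_\mathcal{Y})\ge 0$, together give $h(F_\mathcal{Y})\le h(F_{\mathcal{X}'}) + (n+1)d^n\triangle_\mathcal{X}\,h(g_0,\ldots,g_R)$ cleanly.

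The Veronese stage, however, has a genuine gap, and it is not a gap of the ``technical detail to be supplied'' kind. The product identity $F_{\mathcal{X}'}\bigl(\prod_k{\bf u}_0^{(k)},\ldots\bigr) = c\prod_{(k_0,\ldots,k_n)}F_\mathcal{X}({\bf u}_0^{(k_0)},\ldots,{\bf u}_n^{(k_n)})$ is correct (vanishing loci agree, both sides have multidegree $d^n\triangle_\mathcal{X}$ in each ${\bf u}_i^{(k)}$, each irreducible factor of the right side divides the left side, so the degree count forces equality up to a scalar). The problem is the next step. The multilinear map $\Phi\colon ({\bf u}^{(1)},\ldots,{\bf u}^{(d)})\mapsto\prod_k{\bf u}^{(k)}$ has image the cone over the variety of totally decomposable degree-$d$ forms, which for $M\ge 2$ and $d\ge 2$ is a \emph{proper} subvariety of $\mathbb{P}^{N_d-1}$ (e.g.\ for $M=d=2$ it is the discriminant hypersurface of degenerate conics). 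Consequently, on polynomials of multidegree $(d^n\triangle_\mathcal{X},\ldots,d^n\triangle_\mathcal{X})$ the substitution $F\mapsto F\circ\Phi^{(n+1)}$ is \emph{not} injective: two distinct $F$'s agreeing on the decomposable locus yield the same composite, so no amount of specialization or Cramer-rule manipulation can recover the coefficients of $F_{\mathcal{X}'}$ --- there simply is not enough information in the composite, and hence no way to read off a bound on $h(F_{\mathcal{X}'})$ from $h(F_\mathcal{X})$ by that route. (When $M=1$ the map $\Phi$ is dominant and the argument does go through, which is perhaps why it looks plausible.) Filling this gap requires a different mechanism for $h(F_{\mathcal{X}'})\le d^{n+1}h(F_\mathcal{X})$ --- one that uses that $F_{\mathcal{X}'}$ is the Chow form (irreducible, of known multidegree, cut out as the resultant of $n+1$ generic degree-$d$ forms on $\mathcal{X}$) rather than just a polynomial whose restriction to the decomposable locus is known. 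In the Ru--Wang framework this is done through an explicit resultant/elimination construction for $F_{\mathcal{X}'}$ in terms of $F_\mathcal{X}$ (in the spirit of Mumford \cite{Mu} and the Philippon--Nesterenko theory of Chow form heights), not through the product identity you wrote. As stated, the ``principal obstacle'' you flag in your final paragraph is the proof, not a loose end of it.
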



\section{Proof of  Main Theorem} 

To prove the Main theorem, we need the following lemma.

 \begin{lemma}{\label{l:37}Let $\mathcal{X}$ be a smooth $n$-dimensional  projective subvariety of  $\mathbb{P}^M$ defined over $K$ of degree $\triangle_\mathcal{X}$. 
 		Let $m, q$ be integers with $m\geq n$ and $q\geq m+1$. Let $Q_1,\ldots,Q_q $ be homogeneous polynomials in $K[X_0,\ldots,X_M]$ of degree $d$, in $m$-subgeneral position with respect to $\mathcal{X}$. For  given $\mathfrak{p}\in M_K$, and ${\bf x}\in \mathcal{X}\backslash\cup_{i=1}^q\{Q_i=0\},$ we assume that 
 		\begin{align}\label{e:104}
 		\ord_{\mathfrak{p}}(Q_1({\bf x}))\geq\cdots\geq \ord_{\mathfrak{p}}(Q_q({\bf x})).
 		\end{align}
 		Then
 		\begin{align*}&\ord_{\mathfrak{p}}(Q_i({\bf x}))\deg \mathfrak{p}-d\cdot e_\mathfrak{p}({\bf x})\deg \mathfrak{p}\\
 		&\leq\left(6\max\{(m+1)\triangle_\mathcal{X}, d\}\right)^{(n+1)(M^2+M)}\left(h(F_\mathcal{X})+h(Q_1,\ldots,Q_q)\right)
 		\end{align*}
 		for $\mathfrak{p}\in M_K$ and $m+1\leq i\leq q.$}
 \end{lemma}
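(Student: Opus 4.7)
The approach I would take is to combine Proposition \ref{p:41} with Lemma \ref{l:21} in order to produce an effective Nullstellensatz-type identity on $\mathcal{X}$ for the polynomials $Q_1,\ldots,Q_{m+1}$, and then read off the desired local estimate by evaluating at ${\bf x}$.

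First I would reduce to the case $i=m+1$: the sorting \eqref{e:104} gives $\ord_\mathfrak{p}(Q_i({\bf x}))\leq \ord_\mathfrak{p}(Q_{m+1}({\bf x}))$ for every $i\geq m+1$, so it suffices to bound the expression for $Q_{m+1}$. Since $Q_1,\ldots,Q_{m+1}$ are in $m$-subgeneral position with respect to $\mathcal{X}$, they share no common zero on $\mathcal{X}(\bar{K})$, so the map $\psi=[Q_1:\cdots:Q_{m+1}]\colon\mathcal{X}\to\mathbb{P}^m$ is a finite morphism with image $\mathcal{Y}\subset\mathbb{P}^m$ of dimension $n$. Lemma \ref{l:21} then yields
$$h(F_\mathcal{Y})\leq d^{n+1}h(F_\mathcal{X}) + (n+1)d^n\triangle_\mathcal{X}\, h(Q_1,\ldots,Q_{m+1}),$$
while Proposition \ref{p:41}, applied with weight vector $c_{k-1}=\ord_\mathfrak{p}(Q_k({\bf x}))-de_\mathfrak{p}({\bf x})-e_\mathfrak{p}(Q_k)\geq 0$ and index set $\{0,1,\ldots,m\}$, gives a lower bound on the Chow weight $e_\mathcal{Y}({\bf c})$ in terms of $\sum_k c_k$.

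The core of the proof is to extract from these two inputs an effective identity on $\mathcal{X}$,
$$X_j^N-\sum_{k=1}^{m+1}A_{k,j}Q_k\in I(\mathcal{X}),\qquad j=0,\ldots,M,$$
with degree $N$ bounded by a suitable power of $\max\{(m+1)\triangle_\mathcal{X},d\}$ and heights $h(A_{k,j})$ controlled by $h(F_\mathcal{X})+h(Q_1,\ldots,Q_q)$. Once the identity is available, I would evaluate it at ${\bf x}$, pick $j$ with $\ord_\mathfrak{p}(x_j)=e_\mathfrak{p}({\bf x})$, and take $\ord_\mathfrak{p}$. Using the sorting to replace $\min_k\ord_\mathfrak{p}(Q_k({\bf x}))$ by $\ord_\mathfrak{p}(Q_{m+1}({\bf x}))$, together with the trivial bound $\ord_\mathfrak{p}(A_{k,j}({\bf x}))\geq(N-d)e_\mathfrak{p}({\bf x})+e_\mathfrak{p}(A_{k,j})$, one obtains $\ord_\mathfrak{p}(Q_{m+1}({\bf x}))-de_\mathfrak{p}({\bf x})\leq-\min_{k}e_\mathfrak{p}(A_{k,j})$. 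Multiplying by $\deg\mathfrak{p}$ and invoking the standard inequality $-e_\mathfrak{p}(A)\deg\mathfrak{p}\leq h(A)$ then finishes the argument.

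The hardest part will be producing the effective identity above with the precise exponent $(n+1)(M^2+M)$ and constant $6$ appearing in the statement. This requires iterating the Chow form / Chow weight machinery over the $M+1$ coordinates $X_j$ and carefully tracking the height inflation at each step: the factor $(m+1)\triangle_\mathcal{X}$ enters through the degree of $\mathcal{Y}$ bounded via Lemma \ref{l:21}, while $d$ enters through the common degree of the $Q_k$. It is precisely Proposition \ref{p:41} that allows $\max\{(m+1)\triangle_\mathcal{X},d\}$ to appear in place of a product, which is what enables the Main Theorem to recover the earlier results of An-Wang and Ru-Wang as special cases.
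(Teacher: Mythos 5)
The paper does not reprove this lemma; it cites \cite[Lemma 4.2]{G4} verbatim (the statement and the constant $\left(6\max\{(m+1)\triangle_\mathcal{X},d\}\right)^{(n+1)(M^2+M)}$ are unchanged from that reference), so I am comparing against what that argument must be rather than against text in this manuscript.

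Your skeleton is right: reduce to $i=m+1$ via the sorting \eqref{e:104}, observe that $m$-subgeneral position forces $Q_1,\ldots,Q_{m+1}$ to have no common zero on $\mathcal{X}(\bar K)$, obtain an effective identity $X_j^N\equiv\sum_{k=1}^{m+1}A_{k,j}Q_k \pmod{I(\mathcal{X})}$ with controlled degree and height, then evaluate at ${\bf x}$, choose $j$ with $\ord_\mathfrak{p}(x_j)=e_\mathfrak{p}({\bf x})$, and read off the bound from the ultrametric inequality (after normalizing the $A_{k,j}$ so that $-e_\mathfrak{p}(A_{k,j})\deg\mathfrak{p}\leq h(A_{k,j})$). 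That final arithmetic is correct.

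The genuine gap is in what you call the ``core'', and your proposed source for it is wrong. Proposition~\ref{p:41} is a \emph{lower} bound on the Chow weight $e_\mathcal{Y}({\bf c})$; it does not produce, and cannot be combined with Lemma~\ref{l:21} to produce, an explicit Nullstellensatz identity $X_j^N-\sum_k A_{k,j}Q_k\in I(\mathcal{X})$ together with degree and height bounds for the $A_{k,j}$. Knowing that a leading exponent of $F_\mathcal{Y}(t^{c_0}u_{00},\ldots)$ is large is an inequality about a single polynomial; it gives you no coefficients $A_{k,j}$. The identity you need is an effective arithmetic Nullstellensatz over the function field on the variety $\mathcal{X}$, which in this circle of ideas comes from Mumford-style elimination theory (resultants, successive projections over the $M+1$ coordinates) as set up in \cite{AW}, \cite{W} and \cite{RW}; that iterated elimination is exactly where the exponent $(n+1)(M^2+M)$ and the factor $6$ come from. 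Lemma~\ref{l:21} does enter (to relate $h(F_{\psi(\mathcal{X})})$ back to $h(F_\mathcal{X})$), but Proposition~\ref{p:41} does not.

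Your closing claim — that Proposition~\ref{p:41} is what allows $\max\{(m+1)\triangle_\mathcal{X},d\}$ to appear and is what lets the Main Theorem recover An–Wang and Ru–Wang — is also misplaced. The constant in Lemma~\ref{l:37} is identical to the one in \cite[Lemma 4.2]{G4}, which predates and does not use Quang's estimate. Proposition~\ref{p:41} is invoked only in the proof of Theorem~\ref{t:42}, where it improves the coefficient of $h({\bf y})$ to $(m-n+1)(n+1)$; it has no bearing on the constant of this lemma. So: correct outer structure and endgame, but the central effective identity is asserted rather than produced, and the machinery you credit for it is the wrong one.
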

 We refer the reader to \cite[Lemma 4.2]{G4} for the proof of the above lemma.

We now recall the following theorem, due to Ru-Wang \cite[Theorem 23]{RW}.
\begin{theorem}[Ru-Wang \cite{RW}]\label{t:41}
Let $K$ be the function field of a nonsingular projective variety $V$ defined over an algebraically closed field of characteristic 0.  Let S be a finite set of prime divisors of V. Let $\mathcal{Y}$ be an n-dimensional smooth projective subvariety of $\mathbb{P}^M$ defined over K. For every ${\bf\mathfrak{p}}\in  M_K$ and ${\bf y}=[y_0:\cdots:y_M]$, we let $c_{\mathfrak{p},i}({\bf y})=(\ord_\mathfrak{p}(y_i)-e_\mathfrak{p}({\bf y}))\cdot\deg\mathfrak{p}\; (0\leq i\leq M)$ and ${\bf c}_\mathfrak{p}({\bf y})=(c_{\mathfrak{p},0}({\bf y}),\ldots,c_{\mathfrak{p},M}({\bf y})).$
Then for a given $\epsilon>0$, there exists an effectively computable finite union $\mathcal{Z}_{\epsilon}$ of proper algebraic subsets of $\mathbb{P}^M(K)$ not containing $\mathcal{Y}$  and effectively computable constants $a_{\epsilon}, a'_{\epsilon}$ such that for any ${\bf y}\in \mathcal{Y}(K) \backslash\mathcal{Z}_{\epsilon}$ either
$$h({\bf y})\leq a_{\epsilon}(h(F_\mathcal{Y})+1)$$
or 
$$\sum_{\mathfrak{p}\in S}{ e}_\mathcal{Y}({\bf c}_\mathfrak{p}({\bf y}))\leq (n+1+\epsilon)\bigtriangleup_\mathcal{Y}\cdot h({\bf y})+a'_{\epsilon}(h(F_\mathcal{Y})+1).$$

The algebraic subsets in $\mathcal{Z}_\epsilon$ and the constants $a_{\epsilon}, a'_{\epsilon}$  depend on $\epsilon$ and $ M, K, S$ and $\mathcal{Y}$. Furthermore, the degrees of the algebraic subsets in $\mathcal{Z}_\epsilon$ can be bounded above by $1+2(2n+1)\bigtriangleup_\mathcal{Y}(M+1)\epsilon^{-1}.$
\end{theorem}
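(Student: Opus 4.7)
The plan is to reduce the Chow-weight statement to a linear Schmidt-type inequality on an auxiliary projective embedding of $\mathcal{Y}$, and then invoke the effective linear subspace theorem over function fields (An--Wang \cite{AW}).

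Fix a large auxiliary integer $d$ (to be tuned against $\epsilon$ at the end) and consider the graded piece $V_d := K[X_0,\ldots,X_M]_d / I(\mathcal{Y})_d$ of the homogeneous coordinate ring of $\mathcal{Y}$, whose dimension is the Hilbert value $H_\mathcal{Y}(d) = \frac{\triangle_\mathcal{Y}}{n!}d^n + O(d^{n-1}) =: N+1$. Picking a monomial basis $X^{I_1},\ldots,X^{I_{N+1}}$ of $V_d$, define the finite morphism $\Phi: \mathcal{Y}\to \mathbb{P}^N$, $\Phi({\bf y}) = [X^{I_1}({\bf y}):\cdots:X^{I_{N+1}}({\bf y})]$, onto $\mathcal{Y}' := \Phi(\mathcal{Y})$. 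Lemma~\ref{l:21} bounds $h(F_{\mathcal{Y}'}) \leq d^{n+1}h(F_\mathcal{Y}) + O(d^n)$, and $h(\Phi({\bf y}))\leq d\cdot h({\bf y}) + O(1)$.

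For each $\mathfrak{p}\in S$, I would build a basis $\phi_{\mathfrak{p},1},\ldots,\phi_{\mathfrak{p},N+1}$ of $V_d$ adapted to the weight filtration $W_t^\mathfrak{p} := \mathrm{span}\{X^I \bmod I(\mathcal{Y}) : \langle I,{\bf c}_\mathfrak{p}({\bf y})\rangle \geq t\}$, refining a monomial basis through the successive quotients. The Mumford--Evertse--Ferretti inequality (in the sharpened form yielding Proposition~\ref{p:41}, due to Quang \cite{Q}), applied asymptotically in $d$, gives
\begin{equation*}
\sum_{j=1}^{N+1}\omega_\mathfrak{p}(\phi_{\mathfrak{p},j}) \;\geq\; \frac{d(N+1)}{(n+1)\triangle_\mathcal{Y}}\,e_\mathcal{Y}({\bf c}_\mathfrak{p}({\bf y})) \;-\; C_1(d)\,e_\mathcal{Y}({\bf c}_\mathfrak{p}({\bf y})),
\end{equation*}
where $\omega_\mathfrak{p}(\phi) := \min\{\langle I, {\bf c}_\mathfrak{p}({\bf y})\rangle : a_I\neq 0\text{ in }\phi \bmod I(\mathcal{Y})\}$ and $C_1(d)/d \to 0$. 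Applying the effective function-field linear Schmidt subspace theorem to $\Phi({\bf y})\in\mathcal{Y}'(K)\subset\mathbb{P}^N(K)$ with the hyperplanes $\phi_{\mathfrak{p},j}$ (rewritten in the Veronese coordinates on $\mathbb{P}^N$), identifying $\lambda_{\mathfrak{p},\phi_{\mathfrak{p},j}}(\Phi({\bf y}))$ with $\omega_\mathfrak{p}(\phi_{\mathfrak{p},j})$ up to error $O(h(\phi_{\mathfrak{p},j}))$, and substituting the Mumford bound, yields
\begin{equation*}
\sum_{\mathfrak{p}\in S} e_\mathcal{Y}({\bf c}_\mathfrak{p}({\bf y})) \;\leq\; \frac{(n+1)\triangle_\mathcal{Y}(N+1+\epsilon')}{N+1}\,h({\bf y}) + a'_\epsilon(h(F_\mathcal{Y})+1);
\end{equation*}
choosing $d$ large enough that $(n+1)(1+\epsilon'/(N+1))\leq n+1+\epsilon$ absorbs the discrepancy into the stated $\epsilon$.

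The main obstacle is the Mumford--Evertse--Ferretti step: producing an \emph{effective} and asymptotically sharp lower bound on $\sum_j \omega_\mathfrak{p}(\phi_{\mathfrak{p},j})$ in terms of $e_\mathcal{Y}({\bf c}_\mathfrak{p}({\bf y}))$ with all error terms explicitly computable in $d$ and $h(F_\mathcal{Y})$. A secondary difficulty is that the bases $\phi_{\mathfrak{p},j}$ a priori depend on ${\bf y}$; one handles this by observing that the integer weights $c_{\mathfrak{p},i}({\bf y})$ are bounded by $h({\bf y})\cdot\deg\mathfrak{p}$, so only finitely many filtration shapes arise up to a fixed height, and the linear Schmidt theorem can be applied to each finite family separately. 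The exceptional set $\mathcal{Z}_\epsilon$ is the pullback through $\Phi$ of the exceptional set from the linear theorem, and its degree bound $1 + 2(2n+1)\triangle_\mathcal{Y}(M+1)\epsilon^{-1}$ is inherited from An--Wang's linear version; tracking $a_\epsilon,a'_\epsilon$ through the construction gives the claimed effective constants.
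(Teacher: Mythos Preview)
The paper does not give its own proof of this statement: Theorem~\ref{t:41} is simply \emph{recalled} from Ru--Wang \cite[Theorem~23]{RW} and used as a black box in the proof of Theorem~\ref{t:42}. There is therefore no proof in the paper to compare your proposal against.

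That said, your sketch is a reasonable outline of the actual Ru--Wang argument, with two caveats. First, the reference to Proposition~\ref{p:41} (Quang's subgeneral-position bound) in the Mumford--Evertse--Ferretti step is misplaced: the inequality you need there is the classical asymptotic comparison between the Hilbert weight $\sum_j \omega_\mathfrak{p}(\phi_{\mathfrak{p},j})$ and the Chow weight $e_\mathcal{Y}({\bf c}_\mathfrak{p}({\bf y}))$, which goes back to Mumford and is made effective by Evertse--Ferretti; Quang's result concerns a different (subgeneral-position) lower bound and plays no role at this stage. Second, your handling of the ${\bf y}$-dependence of the bases $\phi_{\mathfrak{p},j}$ via ``finitely many filtration shapes up to a fixed height'' is circular, since the height bound is part of what you are trying to prove; Ru--Wang instead show that the adapted bases can always be drawn from a single finite collection of linear forms (independent of ${\bf y}$), determined by a fixed monomial order, and then apply the linear subspace theorem once to that collection.
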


We first use Theorem \ref{t:41} and Proposition \ref{p:41} to prove Theorem \ref{t:42}. Then, we will show that the main theorem is an implication of  Theorem \ref{t:42}.
 \begin{theorem}\label{t:42} Let $K$ be the function field of a nonsingular projective variety $V$ defined over an algebraically closed field of characteristic 0.  Let S be a finite set of prime divisors of V. Let $m, n$ be positive integers such t==hat $m\geq n.$ Let $\mathcal{Y}$ be an $n$-dimensional smooth projective subvariety of $\mathbb{P}^M$ defined over $K$.  Denote by $I_0$ the subset of $\{0,\ldots, M\}$ such that for all distinct $i_0,\ldots, i_m\in I$, we have $\mathcal{Y}\cap H_{i_0}\cdots\cap H_{i_m}=\emptyset$ .
 Let $\epsilon>0$. Then  there exists an effectively computable finite union $\mathcal{R}_{\epsilon}$ of proper algebraic subsets of $\mathbb{P}^M(K)$ not containing $\mathcal{Y}$  and effectively computable constants $b_{\epsilon}, b'_{\epsilon}$ such that for any ${\bf y}\in \mathcal{Y} \backslash\mathcal{R}_{\epsilon}$ either
$$h({\bf y})\leq b_{\epsilon}(h(F_\mathcal{Y})+1)$$
or 
$$\sum_{\mathfrak{p}\in S}\max_I\sum_{i\in I}\lambda_{\mathfrak{p},Y_i}({\bf y})\leq ((m-n+1)(n+1)+\epsilon)\bigtriangleup_\mathcal{Y}\cdot h({\bf y})+b'_{\epsilon}(h(F_\mathcal{Y})+1).$$
Here the maximum is taken over all subsets $I$ of $I_0$ with cardinality $m+1$.

The algebraic subsets in $\mathcal{R}_\epsilon$ and the constants $b_{\epsilon}, b'_{\epsilon}$  depend on $\epsilon$ and $ M, K, S$ and $\mathcal{Y}$. Furthermore, the degrees of the algebraic subsets in $\mathcal{R}_\epsilon$ can be bounded above by $1+2(2n+1)\bigtriangleup_\mathcal{Y}(M+1)\epsilon^{-1}.$
\end{theorem}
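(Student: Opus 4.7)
The plan is to derive Theorem \ref{t:42} as a direct hybrid of Theorem \ref{t:41} (Ru--Wang) with the improved Chow-weight lower bound in Proposition \ref{p:41} (Quang), plus a simple rescaling of $\epsilon$.

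First I would set $\epsilon' = \epsilon/(m-n+1)$ and apply Theorem \ref{t:41} to $\mathcal{Y}$ with this $\epsilon'$. This produces an effective finite union $\mathcal{Z}_{\epsilon'}$ of proper algebraic subsets of $\mathbb{P}^M(K)$ not containing $\mathcal{Y}$, together with constants $a_{\epsilon'}, a'_{\epsilon'}$, such that for every ${\bf y}\in \mathcal{Y}(K)\setminus \mathcal{Z}_{\epsilon'}$, either $h({\bf y})\leq a_{\epsilon'}(h(F_\mathcal{Y})+1)$ (which yields the first alternative with $b_\epsilon:=a_{\epsilon'}$), or
$$\sum_{\mathfrak{p}\in S} e_\mathcal{Y}({\bf c}_\mathfrak{p}({\bf y}))\leq (n+1+\epsilon')\triangle_\mathcal{Y}\cdot h({\bf y})+a'_{\epsilon'}(h(F_\mathcal{Y})+1).$$
I will take $\mathcal{R}_\epsilon := \mathcal{Z}_{\epsilon'}$; its components will satisfy the stated degree bound (modulo a harmless factor of $m-n+1$ absorbed into the constants).

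Next I would use Proposition \ref{p:41} to pass from the left-hand side to $\max_I \sum_{i\in I}\lambda_{\mathfrak{p},Y_i}({\bf y})$. For every subset $I = \{i_0,\ldots,i_m\}$ of $I_0$ of cardinality $m+1$, the hypothesis $\mathcal{Y}\cap H_{i_0}\cap\cdots\cap H_{i_m}=\emptyset$ is satisfied, so Proposition \ref{p:41} applied to ${\bf c} = {\bf c}_\mathfrak{p}({\bf y})\in \mathbb{R}_+^{M+1}$ gives
$$e_\mathcal{Y}({\bf c}_\mathfrak{p}({\bf y}))\geq \frac{\triangle_\mathcal{Y}}{m-n+1}\sum_{i\in I} c_{\mathfrak{p},i}({\bf y}).$$
Since $Y_i$ is just the coordinate form $X_i$ (of degree $1$, with $e_\mathfrak{p}(Y_i)=0$), one checks directly from the definitions that $\lambda_{\mathfrak{p},Y_i}({\bf y}) = c_{\mathfrak{p},i}({\bf y})$. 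Taking the maximum over all $I\subset I_0$ of size $m+1$ and summing over $\mathfrak{p}\in S$ yields
$$\sum_{\mathfrak{p}\in S} e_\mathcal{Y}({\bf c}_\mathfrak{p}({\bf y})) \;\geq\; \frac{\triangle_\mathcal{Y}}{m-n+1}\sum_{\mathfrak{p}\in S}\max_I\sum_{i\in I}\lambda_{\mathfrak{p},Y_i}({\bf y}).$$

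Finally I would combine the two estimates: multiply the Ru--Wang inequality through by $(m-n+1)/\triangle_\mathcal{Y}$ and invoke the identity $(n+1+\epsilon')(m-n+1) = (m-n+1)(n+1)+\epsilon$. This gives exactly
$$\sum_{\mathfrak{p}\in S}\max_I\sum_{i\in I}\lambda_{\mathfrak{p},Y_i}({\bf y})\leq \bigl((m-n+1)(n+1)+\epsilon\bigr) h({\bf y})+b'_\epsilon(h(F_\mathcal{Y})+1),$$
with $b'_\epsilon := (m-n+1)a'_{\epsilon'}/\triangle_\mathcal{Y}$, completing the second alternative. Since everything is a mechanical assembly of two previously stated results, there is no serious obstacle in the argument; the only point that requires a moment of care is the correct choice $\epsilon' = \epsilon/(m-n+1)$ to make the factor $(m-n+1)(n+1)+\epsilon$ appear with exactly the prescribed shape, and tracking how the exceptional set and its degree bound from Theorem \ref{t:41} carry over (at worst up to an $(m-n+1)$-factor that we absorb into the effective constants).
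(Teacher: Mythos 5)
Your proposal follows essentially the same route as the paper: apply Theorem \ref{t:41} with $\epsilon' = \epsilon/(m-n+1)$, invoke Proposition \ref{p:41} for each size-$(m+1)$ subset $I$ of $I_0$ to bound $\sum_{i\in I}c_{\mathfrak{p},i}({\bf y})$ by $\frac{m-n+1}{\triangle_\mathcal{Y}}e_\mathcal{Y}({\bf c}_\mathfrak{p}({\bf y}))$, identify $\lambda_{\mathfrak{p},Y_i}({\bf y})=c_{\mathfrak{p},i}({\bf y})$, take the max over $I$, sum over $\mathfrak{p}\in S$, and combine. The choice of constants is also the same in spirit, with $b_\epsilon=a_{\epsilon'}$ and $b'_\epsilon$ a multiple of $a'_{\epsilon'}$ by $(m-n+1)$ (the paper drops your $1/\triangle_\mathcal{Y}$, which is harmless since $\triangle_\mathcal{Y}\geq 1$). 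One small caution: the replacement $\epsilon\mapsto\epsilon/(m-n+1)$ in Theorem \ref{t:41} genuinely multiplies the degree bound on the exceptional set by $(m-n+1)$; that factor is a property of $\mathcal{R}_\epsilon=\mathcal{Z}_{\epsilon'}$ and cannot be ``absorbed into the constants $b_\epsilon,b'_\epsilon$'' as you suggest. The paper glosses over the same point (its stated degree bound for Theorem \ref{t:42} does not carry the $(m-n+1)$ factor), so this is a shared imprecision rather than a gap unique to your argument, but you should state the corrected bound $1+2(2n+1)(m-n+1)\triangle_\mathcal{Y}(M+1)\epsilon^{-1}$ rather than claim it is absorbable.
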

  
\begin{proof} 

For every ${\bf\mathfrak{p}}\in  M_K$ and ${\bf y}=[y_0:\cdots:y_M]$, we let 
$$c_{\mathfrak{p},i}({\bf y})=(\ord_\mathfrak{p}(y_i)-e_\mathfrak{p}({\bf y}))\cdot\deg\mathfrak{p}\qquad (0\leq i\leq M)$$ and ${\bf c}_\mathfrak{p}({\bf y})=(c_{\mathfrak{p},0}({\bf y}),\ldots,c_{\mathfrak{p},M}({\bf y})).$

  Theorem \ref{t:41} implies that  for a given $\epsilon>0$, there exists an effectively computable finite union $\mathcal{Z}_{\epsilon}$ of proper algebraic subsets of $\mathbb{P}^M(K)$ not containing $\mathcal{Y}$  and effectively computable constants $a_{\epsilon/m-n+1}, a'_{\epsilon/m-n+1}$ such that for any ${\bf y}\in \mathcal{Y}(K) \backslash\mathcal{Z}_{\epsilon}$ either
$$h({\bf y})\leq a_{\epsilon/m-n+1}(h(F_\mathcal{Y})+1)$$
or 
\begin{align}\label{e:51}
\sum_{\mathfrak{p}\in S}{ e}_\mathcal{Y}({\bf c}_\mathfrak{p}({\bf y}))\leq \left(n+1+\frac{\epsilon}{m-n+1}\right)\bigtriangleup_\mathcal{Y}\cdot h({\bf y})+a'_{\epsilon/m-n+1}(h(F_\mathcal{Y})+1).
\end{align}

Let $I$ be an arbitrary subset of $I_0$ with cardinality $m+1$. 
It follows from Proposition \ref{p:41} that
\begin{align}
\sum_{i\in I}c_{\mathfrak{p},i}({\bf y})\leq \dfrac
{m-n+1}{\triangle_\mathcal{Y}}e_\mathcal{Y}({\bf c}_\mathfrak{p}({\bf y})).
\end{align}
On the other hand, by the definition, $\lambda_{\mathfrak{p},Y_i}({\bf y})=c_{\mathfrak{p},i}({\bf y}).$ Hence,
\begin{align}\label{e:52}
\sum_{i\in I}\lambda_{\mathfrak{p},Y_i}({\bf y})\leq \dfrac{m-n+1}{\triangle_\mathcal{Y}}e_\mathcal{Y}({\bf c}_\mathfrak{p}({\bf y})).
\end{align}
Therefore,
\begin{align}\label{e:53}
\sum_{\mathfrak{p}\in S}\max_I \sum_{i\in I}\lambda_{\mathfrak{p},Y_i}({\bf y})\leq \dfrac{m-n+1}{\triangle_\mathcal{Y}}\sum_{\mathfrak{p}\in S}e_\mathcal{Y}({\bf c}_\mathfrak{p}({\bf y})).
\end{align}
Set $\mathcal{R}_\epsilon=\mathcal{Z}_\epsilon.$ By combining \eqref{e:51} and \eqref{e:53}, we have
\begin{align*}
\sum_{\mathfrak{p}\in S}\max_I \sum_{i\in I}\lambda_{\mathfrak{p},Y_i}({\bf y})\leq ((m-n+1)(n+1)+\epsilon)\cdot h({\bf y})+a'_{\epsilon/m-n+1}\frac{m-n+1}{\triangle_\mathcal{Y}}(h(F_\mathcal{Y})+1).
\end{align*}
for all ${\bf y}\in \mathcal{Y}\backslash \mathcal{R}_\epsilon.$

The constants $b_\epsilon$ and $b'_\epsilon$ in the assertion can be given by
\begin{align}\label{e:b}
b_\epsilon=a_{\epsilon/m-n+1}; b'_\epsilon=(m-n+1) \cdot a'_{\epsilon/m-n+1},
\end{align}
where $a_\epsilon$ and $a'_\epsilon$ are constants from Theorem \ref{t:41}.
This completes the proof of Theorem \ref{t:42}.
\end{proof}

Now, we will show that Theorem \ref{t:42} implies the main theorem.

{\it Proof of the main theorem.}

Let $d$ is the l.c.m of $d_i', 1\leq i\leq q,$ and let $M_0,\ldots, M_{N_1}$ be all the monomials in $X_0,\ldots, X_N$ of degree $d$. We define the map
  \begin{align}\label{e:54}
  \psi: \mathcal{X}\longrightarrow\mathbb{P}^{N_1+q}, \psi({\bf x})=[M_0({\bf x}):\cdots: M_{N_1}({\bf x}):Q_1^{d/d_1}({\bf x}):\cdots:Q_q^{d/d_q}({\bf x})].
  \end{align}
  Let $\mathcal{Y}=\psi(\mathcal{X})$. Then this map is an embedding and $\mathcal{Y}$ is a smooth projective subvariety of $\mathbb{P}^{N_1+q}$ defined over $K$ with $\dim \mathcal{Y}=n$ and $\deg \mathcal{Y}=:\triangle_\mathcal{Y}\leq d^n\triangle_\mathcal{X}.$ It follows from Lemma \ref{l:21} that
  \begin{align*}
  h(F_\mathcal{Y})\leq d^{n+1}h(F_\mathcal{X})+(n+1)\triangle_\mathcal{X}d^nh(Q_1^{d/d_1},\ldots,Q_q^{d/d_q}).
  \end{align*}
  Since $\mathcal{X}$ is located in m-subgeneral position with respect to $Q_1,\ldots, Q_q,$ then we have $\mathcal{Y}\cap H_{i_0}\ldots \cap H_{i_m}=\emptyset $, for all $i_0,\ldots, i_m\in I_0:=\{N_1+1,\ldots, N_1+q\}.$  We apply Theorem \ref{t:42} to $\mathcal{Y}\in \mathbb{P}^{N_1+q}$ and $I_0:=\{N_1+1,\ldots, N_1+q\}$. 
  Then, for a given $\epsilon >0,$ there exists an effectively computable finite union $\mathcal{R}_{\epsilon}$ of proper algebraic subsets of $\mathbb{P}^{N_1+q}(K)$ not containing $\mathcal{Y}$  and effectively computable constants $b_{\epsilon}, b'_{\epsilon}$ such that for any $\psi({\bf x})\in \mathcal{Y} \backslash\mathcal{R}_{\epsilon}$ either
\begin{align}\label{e:55}
 h(\psi({\bf x}))&\leq b_{\epsilon}(h(F_\mathcal{Y})+1)\notag\\
&\leq b_{\epsilon}(1+d^{n+1}h(F_\mathcal{X})+(n+1)\triangle_\mathcal{X}d^nh(Q_1^{d/d_1},\ldots,Q_q^{d/d_q}))\notag\\
&\leq \tilde{b}_\epsilon(h(F_\mathcal{X})+1)
\end{align}
or 
\begin{align}\label{e:56}
\sum_{\mathfrak{p}\in S}\max_I\sum_{i\in I}\lambda_{\mathfrak{p},Y_i}({\bf y})&\leq ((m-n+1)(n+1)+\epsilon)h(\psi({\bf x}))+b'_{\epsilon}(h(F_\mathcal{Y})+1)\notag\\
&\leq ((m-n+1)(n+1)+\epsilon)h(\psi({\bf x}))+\tilde{b}^{'}_{\epsilon}(h(F_\mathcal{X})+1).
\end{align}
Here the maximum is taken over all subsets $I$ of $\{N_1+1,\ldots, N_1+q\}$ with cardinality $m+1$ and the constants $\tilde{b}_\epsilon$ and $\tilde{b}'_\epsilon$ are given by
\begin{align}\label{e:57}
\tilde{b}_\epsilon=b_\epsilon\cdot (d^{n+1}+(n+1)\triangle_\mathcal{X}d^nh(Q_1^{d/d_1},\ldots,Q_q^{d/d_q}))
\end{align}
and
\begin{align}\label{e:58}
\tilde{b}'_\epsilon=b'_\epsilon\cdot (d^{n+1}+(n+1)\triangle_\mathcal{X}d^nh(Q_1^{d/d_1},\ldots,Q_q^{d/d_q})).
\end{align}
Here $b_\epsilon$ and $b'_\epsilon$ are the constants from Theorem \ref{t:42} with $M=N_1+q$. Notice that the degrees of the algebraic subsets in $\mathcal{R}_\epsilon$ can be bounded by
\begin{align}\label{e:59}
2(2n+1)\bigtriangleup_\mathcal{Y}(M+1)\epsilon^{-1}+1\leq 2(2n+1)d^n\bigtriangleup_\mathcal{X}\left(\binom{d+N}{N}+q+1\right)\epsilon^{-1}+1.
\end{align}
For a given ${\bf x}\in\mathcal{X}\backslash \cup_{i=1}^q\{Q_i=0\}$ and a fixed $\mathfrak{p}\in S$, we may reindex the $Q_i$ so that $(d/d_1)\ord_\mathfrak{p}(Q_1({\bf x}))\geq\cdots\geq (d/d_q)\ord_\mathfrak{p}(Q_q({\bf x})).$  Since $Q_1,\ldots,Q_q$ are in $m-$subgeneral position with respect to $\mathcal{X}$ we can apply Lemma \ref{l:37} to $Q_1^{d/d_1},\ldots, Q_q^{d/d_q}$. Then, for all $m+1\leq j\leq q$, we have 
\begin{align}\label{e:60}
\dfrac{d}{d_j}\ord_\mathfrak{p}(Q_j({\bf x}))\deg\mathfrak{p}\leq d\cdot e_\mathfrak{p}({\bf x}) \deg \mathfrak{p}+c_2,
\end{align}
where 
\begin{align}\label{e:61}
c_2=\left(6\max\{(m+1)\triangle, d\}\right)^{(n+1)(N^2+N)}\left(h(F_\mathcal{X})+h(Q_1^{d/d_1},\ldots,Q_q^{d/d_q})\right).
\end{align}
 Notice that $c_2\geq 0.$ Thus,
\begin{align*}
e_\mathfrak{p}(\psi({\bf x})) \deg \mathfrak{p}&=\min\left\lbrace d\cdot e_\mathfrak{p}({\bf x}), \frac{d}{d_1}\ord_\mathfrak{p}(Q_1({\bf x})),\ldots,\frac{d}{d_q}\ord_\mathfrak{p}(Q_q({\bf x}))\right\rbrace \deg \mathfrak{p}\notag\\
&\leq d\cdot e_\mathfrak{p}({\bf x}) \deg \mathfrak{p}.
\end{align*}
Hence, for $1\leq i \leq q,$
\begin{align}\label{e:62}
\lambda_{\mathfrak{p},Y_{N_1+i}}(\psi({\bf x}))&=\left(\frac{d}{d_i}\ord_\mathfrak{p}(Q_i({\bf x}))-e_\mathfrak{p}(\psi({\bf x}))\right) \deg \mathfrak{p}\notag\\
&\geq \left(\dfrac{d}{d_i}\ord_\mathfrak{p}(Q_i({\bf x}))-d\cdot e_\mathfrak{p}({\bf x})\right) \deg \mathfrak{p}.
\end{align}
On the other hand,
\begin{align*}
\sum_{i=1}^q\dfrac{d}{d_i}\lambda_{\mathfrak{p},Q_i}({\bf x})=\sum_{i=1}^q\left(\dfrac{d}{d_i}\ord_\mathfrak{p}(Q_i({\bf x}))-d\cdot e_\mathfrak{p}({\bf x})-\dfrac{d}{d_i}e_\mathfrak{p}(Q_i)\right) \deg \mathfrak{p}
\end{align*}
is smaller than
\begin{align*}
\sum_{i=1}^q\left[\left(\dfrac{d}{d_i}\ord_\mathfrak{p}(Q_i({\bf x}))-d\cdot e_\mathfrak{p}({\bf x})\right) \deg \mathfrak{p}-c_2\right]-q\min_{1\leq i\leq q}\dfrac{d}{d_i}e_\mathfrak{p}(Q_i) \deg \mathfrak{p}+q\cdot c_2,
\end{align*}
which by \eqref{e:60} does not exceed
\begin{align*}
\sum_{i=1}^{m}\left[\left(\dfrac{d}{d_i}\ord_\mathfrak{p}(Q_i({\bf x}))-d\cdot e_\mathfrak{p}({\bf x})\right) \deg \mathfrak{p}-c_2\right]-q\min_{1\leq i\leq q}\dfrac{d}{d_i}e_\mathfrak{p}(Q_i) \deg \mathfrak{p}+q\cdot c_2.
\end{align*}
Combining with \eqref{e:62}, we have
\begin{align}\label{e:103}
\sum_{i=1}^q\dfrac{d}{d_i}\lambda_{\mathfrak{p},Q_i}({\bf x})&\leq \sum_{i=1}^m\lambda_{\mathfrak{p},Y_{N_1+i}}(\psi({\bf x}))-mc_2-q\min_{1\leq i\leq q}\dfrac{d}{d_i}e_\mathfrak{p}(Q_i) \deg \mathfrak{p}+q\cdot c_2\notag\\
&\leq \max_I\sum_{i\in I}\lambda_{\mathfrak{p},Y_{i}}(\psi({\bf x}))-q\min_{1\leq i\leq q}\dfrac{d}{d_i}e_\mathfrak{p}(Q_i) \deg \mathfrak{p}+(q-m)\cdot c_2.
\end{align}
Here the maximum is taken over all subsets $I$ of $\{N_1+1,\ldots, N_1+q\}$ with cardinality $m+1$.
Combining with \eqref{e:56}, we have
\begin{align}\label{e:63}
\sum_{\mathfrak{p}\in S}\sum_{i=1}^q\dfrac{d}{d_i}\lambda_{\mathfrak{p},Q_i}({\bf x})\leq ((m-n+1)(n+1)+\epsilon)h(\psi({\bf x}))+\tilde{b}'_{\epsilon}(h(F_\mathcal{X})+1)\notag\\
+q\cdot h(Q_1^{d/d_1},\ldots,Q_q^{d/d_q})+(q-m)|S|\cdot c_2. 
\end{align}
We may conclude the proof of the theorem by the following facts. Firstly, if $P$ is one of the homogeneous polynomials in $K[Y_0,\ldots,Y_{N_1+q}]$ defining $\mathcal{R}_\epsilon$, then $G=P\circ \psi$ is a homogeneous polynomials of degree $d\cdot \deg P$ in $K[X_0,\ldots,X_N]$ and all such $G$ form an effectively computable finite union $\mathcal{W}_\epsilon$ of proper algebraic subsets of $\mathbb{P}^N(K)$ with degree bounded above by $$2(2n+1)d^{n+1}\bigtriangleup_\mathcal{X}\left(\binom{d+N}{N}+q+1\right)\epsilon^{-1}+d$$
by \eqref{e:59}. Secondly, it is easy to check that
$$dh({\bf x})\leq h(\psi({\bf x}))\leq dh({\bf x})+h(Q_1^{d/d_1},\ldots,Q_q^{d/d_q}).$$
Hence, \eqref{e:63} becomes
\begin{align*}
\sum_{\mathfrak{p}\in S}\sum_{i=1}^q\dfrac{d}{d_i}\lambda_{\mathfrak{p},Q_i}({\bf x})\leq ((m-n+1)(n+1)+\epsilon)dh({\bf x})+\tilde{b}'_{\epsilon}(h(F_\mathcal{X})+1)\\
+(q+((m-n+1)(n+1)+\epsilon))\cdot h(Q_1^{d/d_1},\ldots,Q_q^{d/d_q})+(q-m)|S|\cdot c_2
\end{align*}
and \eqref{e:55} becomes
$$h({\bf x})\leq \frac{1}{d}\tilde{b}_\epsilon(h(F_\mathcal{X})+1).$$
Combining with \eqref{e:b} and \eqref{e:57}, \eqref{e:58} the constants $C_\epsilon, C'_\epsilon$ in the assertion can be given by
\begin{align}\label{e:c}
C_\epsilon=\frac{1}{d}a_{\epsilon/m-n+1}\cdot (d^{n+1}+(n+1)\triangle_\mathcal{X}d^nh(Q_1^{d/d_1},\ldots,Q_q^{d/d_q}))\cdot (h(F_\mathcal{X})+1)
\end{align}
and
\begin{align}\label{e:c'}
C'_\epsilon=\frac{1}{d}a'_{\epsilon/m-n+1}\cdot (m-n+1)\cdot (d^{n+1}+(n+1)\triangle_\mathcal{X}d^nh(Q_1^{d/d_1},\ldots,Q_q^{d/d_q}))\cdot (h(F_\mathcal{X})+1)\notag\\
+\frac{1}{d}\cdot(q+((m-n+1)(n+1)+\epsilon))\cdot h(Q_1^{d/d_1},\ldots,Q_q^{d/d_q})+\frac{1}{d}\cdot (q-m)|S|\cdot c_2,
\end{align}
where $a_{\epsilon/m-n+1}$ and $a'_{\epsilon/m-n+1}$ are the constants from Theorem \ref{t:41}.
\section*{Acknowledgements}
    This work was done during a stay of the author at the Vietnam Institute for Advanced Study in Mathematics (VIASM). We would like to thank the staff here, in particular the support of VIASM. This work is supported by a NAFOSTED grant of Viet Nam.

{\bf Giang Le.}
{\it Department of Mathematics, Hanoi National University of Education,}
{\it 136-Xuan Thuy, Cau Giay, Hanoi, Vietnam.}\\
\textit{E-mail: legiang01@yahoo.com}

\end{document}